\newcommand{\dt}{\Delta t}
\newtheorem{thrm}{Theorem}
\newcommand{\mj}{\mathcal{J}^\infty}
\def\N{\mathbb N}
\def\R{\mathbb R}
\def\P{\mathbb P}
\begin{document}

\title{Numerical approximation of a linear elasticity model}
\author{Samir Kumar Bhowmik\\
Department of Mathematics, University of Dhaka, Dhaka 1000, Bangladesh\\
Bhowmiksk@gmail.com}

\maketitle

\begin{abstract}
We consider a nonlocal linear elastic wave model. We approximate the model using a spectral Galerkin method in space and analyze error in  such an approximation. We perform  some numerical experiments to demonstrate the scheme.
\end{abstract}

\textbf{keywords:} convolution integral; polynomial approximation; error estimate.
%\end{keywords}

%\begin{classcode}F1.1; F4.3 \end{classcode}\bigskip

\section{Introduction}
Integro-differential equations
 arise while modeling many problems in physics, biology and other areas which involve non-local diffusion/dispersal mechanisms~\cite{C.Fife, Dug, EEOW01}.
A nonlocal elastic wave model can be written as  \cite{EEOW01}
\begin{equation}\label{eq:001aa}
\frac{\partial^2}{\partial t^2} u(x,t) = \rho \mathcal{L}u(x, t) + g(x,t)\quad (t, x)\in \Omega\times (0, T),
\end{equation}
where
$$
   \mathcal{L}\phi(x)=\int_{\Omega} \mj(x-y)\phi(y)dy + \alpha(x)\phi(x),
$$
$(0, T)$, $T>0$ is the time interval under consideration, $\alpha(x)$ is a time-independent coefficient and $g(x, t)$ is an inhomogeneity. Here we consider $u(x, 0)=u_0$, and $\frac{\partial}{\partial t}u(x, 0)=v_0$, $\Omega\subset\R$, where $u(x, t)$  denotes the displacement field and $\partial_t u(x, t)$ represents velocity.
Here the $L^1(\R)$ kernel $\mj (x)$ satisfies  $\mj(x)=\mj(-x)$ (symmetry).

We assume that   $\alpha(x)$ is a constant given by
$
\alpha=-\int_{\Omega} \mj(x) dx,
$
 and for simplicity we consider a normalized kernel function ($\int \mj(x)dx =1$).
Then  \eqref{eq:001aa} can be written as
\begin{equation}\label{eq:001}
\frac{\partial^2}{\partial t^2} u(x,t) = \rho \mathcal{L}u(x, t) + g(x,t)
\end{equation}
where
\[
\mathcal{L}\phi(x)=\int_{\Omega} \mj(x-y)(\phi(y)-\phi(x)) dy.
\]
Here we restrict ourself by $\mj \ge 0$ (nonnegative) as well.

Now let $y-x=z$, then $dy=dz$. Considering $\Omega = \mathbb{R}$ \cite{SKB02}  in (\ref{eq:001}) we have
\[
I_{1}=\varepsilon \int_{\mathbb{R}}\mj(x-y)\left(u(y,t)-u(x,t)\right)dy
%\]
%\[
=\varepsilon \int_{\mathbb{R}} \mj(z)\left(u(x+z,t)-u(x,t)\right)dz.
\]
Expanding $u(x+z)$ in terms of Taylor series around $z=0$ we have
\[
I_1=\varepsilon\int_{\mathbb{R}} \mj(z)\left(u(x,t)+z u_{x}(x,t)+\frac{z^2}{2!} u_{xx}(x,t)
                                                          +\cdots\cdots -u(x,t)\right)dz.
\]
Thus we have
\begin{equation}\label{tx:f}
u_{tt}=\varepsilon u_x \int_{\mathbb{R}} z \mj(z)dz+\frac{\varepsilon u_{xx}}{2!}\int_{\mathbb{R}} z^2 \mj(z)dz +\cdots\cdots + g(x, t).
\end{equation}
As $J(z)$ is symmetric, integrals with odd powers of $z$ are zero.
So (\ref{tx:f}) takes the form
$
u_{tt}=C_2 u_{xx}+C_4 u_{xxxx}+...+g(x, t).
$
where $C_{2 m}=\frac{\varepsilon}{2m!}\int_{\mathbb{R}} z^{2m}\mj(z)dz$; for each $ m=1, 2, 3,\cdots.$
Ignoring the higher order terms we have
\begin{equation}\label{pde1:f}
u_{tt}=C_2 u_{xx}+g(x, t).%= \mathbf{L}u +f(u).
\end{equation}
The above equation is the familiar non-homogeneous wave equation, and is a first order approximation of the non-local equation \eqref{eq:001}. A comparison between the operators $u_{xx}$ and $\mathcal{L}u$  is well presented in \cite{SKB02, JCAC2004}.
%\\
%\textbf{{\color[rgb]{1,0,0} TRY  TO FIND RELATION OF FREE ENERGY USING EQUATION (11.7) OF LARSON'S BOOK , GIVE SOME PICTURES TO COMPARE NON-LOCAL AND LOCAL WAVE SOLUTIONS}}
%\\
%
Now following \cite{SKB02} it is well understood that
 $\|\mathcal{L}\|$ is bounded and $\mathcal{L}$ is negative semi-definite,  if the kernel function $\mj$ is considered non-negative, symmetric and normalized.
These properties are important for the analysis we performed in this study.

Emmrich et. al.~\cite{EEOW01} consider the non-local elastic model \eqref{eq:001}. They analyze mathematical and numerical solutions of the model. They compare the non-local model with that of a local continuum model. They claim two advantages of using this non-local model.  They propose and implement  a quadrature based numerical scheme considering infinite spatial domain.

It is well understood from \cite{SamirKumarBhowmik04, D.J.Duffy03} that an integro-differential
equation of type \eqref{eq:001} defined in the infinite
domain can also be defined in a truncated finite domain $[A, B]$ where $A$ and $B$ depend  on the decay of the kernel function $J^\infty(x)$. It is to note that a closed form formula to find suitable
$A$ and $B$ is well
presented in \cite{D.J.Duffy03}. Thus the analysis and the approximation
in a periodic $\Omega = [0, 1]$ spatial domain can also be applied in any bounded interval $[A, B]$ (and vice-versa).
Considering the kernel of the convolution integral as
$
g_{\delta}(x) = \sqrt{\frac{1}{2\pi \delta}} \exp \left(-\frac{y^2}{2\delta^2}\right),
$
the author in \cite{D.J.Duffy03}   formulate
$
 A = +\sqrt{-2\delta^2 \log (\delta \varepsilon \sqrt{2\pi})},
$
and
$
B= -A,
$
where $\varepsilon>0$ considered so that $g_{\delta}(x) \ge \varepsilon$.

One can consider the model in a spatial periodic domain~\cite{SKB02} as well.
If we consider a spatially one-periodic initial function $u(x, 0)$,
then for all $x\in \R$ and $t \in \R_+$
$
u(x, t) = u(x+L, t).
$
Then (\ref{eq:001}) can be written as
\begin{eqnarray}\label{f:equ01a}
 u_{tt} &=& (\mathcal{L}u(\cdot, t))(x) +g(x,t)%\nonumber\\
        = \int_{\Omega} \mathcal{J}(x-y)\left(u(y, t)-u(x, t)\right)dy +g(x,t)
\end{eqnarray}
where
$
  \mathcal{J}(x) = \sum_{r=-\infty}^{\infty} \mj (x-rL)
$
for all $x \in [0, L]$, $L>0$. We are interested to consider both periodic and non-periodic domain for spatial approximations of the model.

Convolution models have been extensively investigated both theoretically and numerically~\cite{SKB_pap_linearide, EEOW01}, but a lot more yet to be done to get a highly accurate solver. Since an analytic solution can not be obtained  always, an efficient and highly accurate numerical scheme has to be developed. Spatial approximation of this type of convolution models are interesting as well as challenging.
The unknown function under integral sign and the nonlinearity involved in the model make the
approximation more challenging. There are various ways to handle such problems.
In most cases scientists use some lower order schemes (with midpoint quadrature rules for integration \cite{SKB02, SKB_pap_phase_trans}) to serve their purpose. Thus there is much room for improvement and we find an interest of
presenting and analyzing a higher order technique for space integration. To the best of our knowledge, Legendre spectral Galerkin method has not been performed for this model problem yet and we find an interest  to investigate such  a scheme.

In this article, we start with numerical accuracy analysis of a spectral galerkin approximation in section 2 followed by numerical implementations of the scheme in section 3. We present several computer generated solutions with relevent discussion in section 4. We implement the schemes in MATLAB.
%
%------------------------------------
%
\section{Accuracy of spectral Galerkin  approximation}
%
%Let us consider
%\begin{equation}\label{f:wave01}
% u_{tt} =  \mathcal{L} u (x, t) + g(x, t).
%\end{equation}
%
In this section, we look for spectral Galerkin scheme for space approximation and investigate convergence of such a scheme. Here we consider Legendre spectral polynomials. It is our goal to find $u=u(x, t)$, $x\in \Omega$ such that
\[
u_{tt}(x, t)=\mathcal{L} u (x, t)+g(x, t), \forall x\in\Omega,
\]
and we seek for weak form to find $u\in L_2(\Omega)$ such that
\begin{equation}\label{f:specweak01}
 (u_{tt}, v) =  (\mathcal{L} u (x, t), v) + (g(x, t), v), \quad \forall \quad v\in L_2(\Omega).
\end{equation}
We denote $\N$ be the set of all nonnegative integers. For any $N \in \N$, $\P_N$ denotes the set of all algebraic polynomials of degree at most  $N$ in $\Omega$. We define Legendre spectral Galerkin approximation  of (\ref{eq:001}) as:
Find
\[
  u^N \in \P_N, \quad \text{such that}
\]
\begin{equation}\label{f:specweak01a}
 (u^N_{tt}, v) =  (\mathcal{L} u^N (x, t), v) + (g(x, t), v), \quad \forall \quad v\in \P_N(\Omega).
\end{equation}
We define projection operator $\P_N$ such that
\[
(u, v) =(P_N u, v ),  \quad \forall \quad v\in \P_N.
\]
Then \cite{P.Solin.K.Segeth.I.Dolezel2004, C.Canuto.M.Y.Hussaini.A.Quarterni.T.A.Zang2006}
\[
\|u-P_N u\| \le C N^{-m} |u|_{H^m(\Omega)} \le C N^{-m} \|u\|_{H^m(\Omega)}
\]
where
\[
\|u\|_{H^m(\Omega)} = \left( \sum_{k=0}^m \|u^{k}\|\right)^{1/2},
\]
and
\[
|u|_{H^m(\Omega)} = \left( \sum_{k=\min{m, N+1}}^m \|u^{k}\|\right)^{1/2},
\]
and
\[
\|u\|_{m, \infty} = \max_{0\le k\le m}\left\{ \|u^{k}\|_\infty\right\},
\]
and $C$ denotes a nonnegative constant which is independent of $N$.

Let $e^N=u-u^N$ be the error in spectral Galerkin approximation to the Legendre spectral Galerkin solution $u^N$ of (\ref{f:specweak01}). Now from (\ref{f:specweak01}) and (\ref{f:specweak01a}) we have
\begin{equation}\label{f:err01}
((u-u^N)_{tt}, v) = (\mathcal{L}(u-u^N), v).
\end{equation}
\begin{thrm}\label{f:thrm_galr}
If $u^N\in \P_N$  is a solution of  (\ref{f:specweak01a}), and $u \in S$ is a  solution of (\ref{f:specweak01}) then the following inequality holds
\[
\|(u(\cdot, t)-u^N(\cdot, t))_{tt}\| \le C N^{-m} \|u(\cdot, t)\|_{m}, %\quad \forall \quad 1\le r<k.
\]
for some $C>0$.
\end{thrm}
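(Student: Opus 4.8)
The plan is to control the second time derivative of the error $e^N = u - u^N$ by splitting it into a component that lives in $\P_N$ and a component orthogonal to $\P_N$, and then treating each piece separately. First I would introduce the decomposition
\[
(e^N)_{tt} = \bigl(u_{tt} - P_N u_{tt}\bigr) + \bigl(P_N u_{tt} - u^N_{tt}\bigr) =: \rho + \zeta,
\]
using that the spatial projection $P_N$ commutes with $\partial_t^2$, that $u^N_{tt}\in\P_N$ (differentiation in $t$ does not raise the polynomial degree in $x$), and hence $\zeta\in\P_N$ while $\rho=(I-P_N)u_{tt}$ is $L_2$-orthogonal to every element of $\P_N$. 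This orthogonality gives the Pythagorean identity $\|(e^N)_{tt}\|^2 = \|\rho\|^2 + \|\zeta\|^2$, so it suffices to bound the two terms.

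The term $\rho$ is handled directly by the Legendre projection estimate quoted before the theorem: $\|\rho\| = \|(I-P_N)u_{tt}\| \le C N^{-m}|u_{tt}|_{H^m} \le C N^{-m}\|u\|_m$, where the last step absorbs the time derivatives into the right-hand norm. This step needs nothing beyond the stated approximation property of $P_N$. For the polynomial part $\zeta$, I would test the error equation (\ref{f:err01}) against $v=\zeta\in\P_N$. Using the defining property $(u_{tt},v)=(P_N u_{tt},v)$ for $v\in\P_N$, one verifies $(\zeta,v) = ((e^N)_{tt},v) = (\mathcal{L}e^N,v)$ for all $v\in\P_N$; choosing $v=\zeta$, then applying Cauchy--Schwarz together with the boundedness of $\mathcal{L}$ established earlier, yields $\|\zeta\| \le \|\mathcal{L}\|\,\|e^N\|$. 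Combining this with the bound on $\rho$ gives
\[
\|(e^N)_{tt}\| \le C N^{-m}\|u\|_m + \|\mathcal{L}\|\,\|e^N\|.
\]

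The main obstacle is the residual term $\|e^N\|$: closing the estimate requires an independent bound of the form $\|e^N\|\le C N^{-m}\|u\|_m$ on the $L_2$ error of the solution itself. I would obtain this from a standard energy argument for the second-order-in-time problem. Writing $e^N = \eta + \xi$ with $\eta = u-P_N u$ (controlled by the projection estimate) and $\xi = P_N u - u^N\in\P_N$, I would derive the evolution equation satisfied by $\xi$ from (\ref{f:err01}), test it against $\xi_t$, and use the negative semi-definiteness of $\mathcal{L}$ to show that the discrete energy does not grow, so that a Gronwall inequality together with the approximation of the initial data bounds $\|\xi\|$, and hence $\|e^N\|$. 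Feeding this bound back into the displayed inequality completes the proof. The delicate point is precisely this energy step, since it is where the negative semi-definiteness of $\mathcal{L}$ and the consistency of the initial projection enter; by contrast, the splitting, the orthogonality identity, and the projection estimates are routine.
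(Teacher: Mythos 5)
Your proposal takes a genuinely different route from the paper's. The paper uses the same splitting $e^N=\rho_N+\theta_N$ with $\rho_N=u-P_Nu$, $\theta_N=P_Nu-u^N$, but it never runs an energy argument: it tests the error equation with $v=\theta_N$ itself (not with a second time derivative), invokes the negative semi-definiteness of $\mathcal{L}$ (Theorem~\ref{f:semidefn}) to discard $(\mathcal{L}\theta_N,\theta_N)$, bounds $|(\mathcal{L}\rho_N,\theta_N)|\le C\|\rho_N\|\,\|\theta_N\|$ by Theorem~\ref{f:boundL2}, and then ``cancels'' $\|\theta_N\|$ from both sides to conclude. Your version instead tests with $\zeta=(P_Nu-u^N)_{tt}$, gets the clean identity $\|\zeta\|^2=(\mathcal{L}e^N,\zeta)$, hence $\|\zeta\|\le\|\mathcal{L}\|\,\|e^N\|$, and then pays for this with a separate energy/Gronwall estimate for $\|e^N\|$. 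Your route is longer but buys rigor at exactly the two places the paper is shaky: (i) the paper's cancellation step runs Cauchy--Schwarz in the wrong direction --- from $|((\theta_N)_{tt},\theta_N)|\le C\|\rho_N\|\,\|\theta_N\|$ one cannot deduce $\|(\theta_N)_{tt}\|\,\|\theta_N\|\le C\|\rho_N\|\,\|\theta_N\|$, since Cauchy--Schwarz bounds the inner product from above, not below; (ii) your Pythagorean splitting explicitly handles the piece $(u-P_Nu)_{tt}$, which the paper silently drops (its proof bounds only $\|(\theta_N)_{tt}\|$, not the full $\|(e^N)_{tt}\|$ claimed in the statement). What the paper's shortcut buys is brevity and independence from initial-data assumptions and time-history norms.

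Two caveats before your plan is a complete proof. First, the ``absorption'' step $|u_{tt}|_{H^m}\le\|u\|_m$ is not free: spatial Sobolev norms of $u_{tt}$ are not controlled by those of $u$ in general. You must invoke the equation itself, $u_{tt}=\rho\mathcal{L}u+g$, together with Young's inequality $\|\mj *u\|_{H^m}\le\|\mj\|_{L^1}\|u\|_{H^m}$ and smoothness of $g$, and the inhomogeneity $g$ will then appear in the bound (this exposes a weakness in the theorem's statement that the paper's proof simply never confronts). Second, the deferred energy step is where the real work lies: you need $\mathcal{L}$ self-adjoint so that $(\mathcal{L}\xi,\xi_t)=\frac{1}{2}\frac{d}{dt}(\mathcal{L}\xi,\xi)$, you need discrete initial data chosen as $u^N(0)=P_Nu_0$, $u^N_t(0)=P_Nv_0$ so the initial energy vanishes, and the resulting bound involves $\int_0^t\|\rho_N(\cdot,s)\|\,ds$, i.e.\ a quantity like $\sup_{s\le t}\|u(\cdot,s)\|_m$ rather than the pointwise-in-time $\|u(\cdot,t)\|_m$ appearing in the statement. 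Neither caveat breaks your strategy --- it is the standard, correct one for second-order-in-time Galerkin error analysis --- but both must be written out, and the second slightly changes the form of the final estimate.
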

We need the following results to prove Theorem~\ref{f:thrm_galr}.
\begin{thrm}\label{f:semidefn}~\cite{SKB_pap_phase_trans}
If $\mj(x) \ge 0$, $\mj(-x) = \mj(x)$ for all $x \in \Omega$  and $u(x)\in \mathbb{R}$ with $u \in L_2(\Omega)$
then $\mathbb{L}$ is negative semi-definite.
\end{thrm}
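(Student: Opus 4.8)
The plan is to establish negative semi-definiteness directly from the definition of $\mathcal{L}$, that is, to show that the quadratic form $(\mathcal{L}u, u)$ is nonpositive for every $u \in L_2(\Omega)$. First I would write out the form explicitly as the double integral
\[
(\mathcal{L}u, u) = \int_\Omega \int_\Omega \mj(x-y)\bigl(u(y)-u(x)\bigr)u(x)\,dy\,dx,
\]
which is well-defined because $\mj \in L_1$ together with $u \in L_2(\Omega)$ guarantee (via the boundedness of $\mathcal{L}$ noted earlier) that $\mathcal{L}u \in L_2(\Omega)$, so the inner product makes sense and Fubini's theorem permits interchanging the order of integration.

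The central step is a symmetrization argument. Relabeling the integration variables $x \leftrightarrow y$ and invoking the kernel symmetry $\mj(y-x) = \mj(x-y)$ produces the equivalent representation
\[
(\mathcal{L}u, u) = \int_\Omega \int_\Omega \mj(x-y)\bigl(u(x)-u(y)\bigr)u(y)\,dy\,dx.
\]
Adding this to the original expression and simplifying the bracket $\bigl(u(y)-u(x)\bigr)u(x) + \bigl(u(x)-u(y)\bigr)u(y)$, the cross terms combine into a perfect square carrying a negative sign, so that
\[
(\mathcal{L}u, u) = -\frac{1}{2}\int_\Omega \int_\Omega \mj(x-y)\bigl(u(y)-u(x)\bigr)^2\,dy\,dx.
\]

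From here the conclusion is immediate. Since $\mj \ge 0$ by hypothesis and the factor $\bigl(u(y)-u(x)\bigr)^2$ is manifestly nonnegative, the integrand is nonnegative everywhere on $\Omega \times \Omega$, hence the double integral is nonnegative and $(\mathcal{L}u, u) \le 0$ for all $u \in L_2(\Omega)$. This is exactly the assertion that $\mathcal{L}$ is negative semi-definite, and the normalization $\int \mj = 1$ is not even needed for the sign, only the nonnegativity and symmetry of the kernel.

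The only genuine technical point I anticipate is the justification of the variable relabeling and the interchange of integration order underlying the symmetrization, which require the integrand to be absolutely integrable over the product domain. The nonnegativity and $L_1$-integrability of $\mj$ combined with $u \in L_2(\Omega)$ make this routine via Fubini--Tonelli, so I expect no serious obstacle; the symmetrization identity itself is the heart of the argument, and once it is in place the nonpositivity follows with no further computation.
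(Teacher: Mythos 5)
Your proof is correct, and since the paper itself gives no proof of this result --- it is simply quoted from the cited technical report of Bhowmik and Duncan --- your symmetrization argument, writing $(\mathcal{L}u,u)$ as a double integral, swapping $x \leftrightarrow y$ using $\mj(x-y)=\mj(y-x)$, and averaging to obtain
\[
(\mathcal{L}u, u) = -\frac{1}{2}\int_\Omega \int_\Omega \mj(x-y)\bigl(u(y)-u(x)\bigr)^2\,dy\,dx \le 0,
\]
is precisely the standard proof underlying that citation. Your observations that only nonnegativity and symmetry of $\mj$ (not the normalization) are needed, and that Fubini--Tonelli justifies the rearrangement given $\mj \in L_1$ and $u \in L_2(\Omega)$, are both accurate.
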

\begin{thrm}\label{f:boundL2}~\cite{SKB_pap_phase_trans}
If $\mj(x) \ge 0$ $\forall$ $x \in \mathbb{R}$,
$ \mj(x) \in L_2(\mathbb{R})$ and $\int_{\mathbb{R}} \mj(x) dx = 1,$ then for any $\Omega \subseteq \mathbb{R}$,
$\mathbb{L}$ is bounded and
$
\|\mathbb{L}\| \le 2.
$
\end{thrm}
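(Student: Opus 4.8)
\section*{Proof proposal for Theorem~\ref{f:boundL2}}

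The plan is to split $\mathbb{L}$ into a convolution operator and a multiplication operator on $L_2(\Omega)$ and to bound each by $1$. Writing out the definition of the operator,
\[
\mathbb{L}\phi(x) = \int_{\Omega} \mj(x-y)\phi(y)\,dy - \phi(x)\int_{\Omega} \mj(x-y)\,dy =: (K\phi)(x) - \alpha(x)\phi(x),
\]
where $K$ is the (truncated) convolution against the kernel $\mj$ and $\alpha(x) = \int_{\Omega} \mj(x-y)\,dy$ is the multiplier produced by the diagonal subtraction. By the triangle inequality $\|\mathbb{L}\phi\| \le \|K\phi\| + \|\alpha\,\phi\|$, so it suffices to show that each of the two pieces has operator norm at most $1$.

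For the multiplication part I would use nonnegativity and normalization directly. Since $\mj \ge 0$, for every $x$ the substitution $z=x-y$ gives $0 \le \alpha(x) = \int_{x-\Omega}\mj(z)\,dz \le \int_{\R}\mj(z)\,dz = 1$, where the final equality is the normalization hypothesis. Hence $\|\alpha\|_\infty \le 1$ and $\|\alpha\,\phi\| \le \|\alpha\|_\infty\,\|\phi\| \le \|\phi\|$. Note that this step is where I expect the sign condition $\mj\ge 0$ to be essential: without it one could only bound $\alpha$ by $\int|\mj|$, which need not equal $1$.

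For the convolution part the cleanest route is Young's convolution inequality. Extending $\phi$ by zero outside $\Omega$ to obtain $\tilde\phi$ on $\R$, I would view $K\phi$ as the restriction to $\Omega$ of $\mj * \tilde\phi$, so that
\[
\|K\phi\|_{L_2(\Omega)} \le \|\mj * \tilde\phi\|_{L_2(\R)} \le \|\mj\|_{L^1(\R)}\,\|\tilde\phi\|_{L_2(\R)} = \|\phi\|_{L_2(\Omega)},
\]
using $\|\mj\|_{L^1}=1$. Equivalently, on the Fourier side $\|K\|\le\|\widehat{\mj}\|_\infty\le\|\mj\|_{L^1}=1$; the hypothesis $\mj\in L_2(\R)$ then guarantees $\widehat{\mj}\in L_2$ as well and places the whole argument comfortably in the $L_2$ setting, although the norm bound itself only consumes the $L^1$ normalization. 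Combining the two estimates yields $\|\mathbb{L}\phi\| \le 2\|\phi\|$, i.e. $\|\mathbb{L}\| \le 2$.

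The main obstacle, such as it is, is bookkeeping rather than analysis: one must ensure that restricting the integration to $\Omega \subseteq \R$ instead of the whole line does not spoil Young's inequality. This is harmless precisely because $\mj \ge 0$, so the zero-extension trick controls the truncated convolution by the full-line convolution, and the same nonnegativity is what keeps $\alpha(x)\le 1$. No cancellation or sharpness argument is required; the constant $2$ is simply the sum of the two unit bounds, and the proof works uniformly for every $\Omega \subseteq \R$, bounded or not.
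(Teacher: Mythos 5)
Your proof is correct, and there is nothing in the paper to diverge from: the paper states Theorem~\ref{f:boundL2} without proof, importing it from \cite{SKB_pap_phase_trans}, and your argument --- splitting $\mathbb{L}$ into the truncated convolution $K$ and the multiplier $\alpha(x)=\int_\Omega \mj(x-y)\,dy$, bounding $K$ by Young's inequality through $\|\mj\|_{L^1}=\int_{\R}\mj=1$ and the multiplier by $\|\alpha\|_\infty\le 1$ --- is exactly the standard argument behind the cited result, including your correct observation that the hypothesis $\mj\in L_2(\R)$ is not actually consumed by the bound. One small quibble with your closing paragraph: the zero-extension step needs no sign condition at all, since $K\phi=\mj*\tilde\phi$ holds identically on $\Omega$ because $\tilde\phi$ vanishes outside $\Omega$; the nonnegativity of $\mj$ is used only where you first invoked it, namely to get $\alpha(x)\le 1$ and to identify $\|\mj\|_{L^1}$ with the normalized integral.
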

\begin{proof}[Proof of Theorem~\ref{f:thrm_galr}]
We write
\[
  e^N=u-u^N = u - P_Nu + P_N u -u^N = \rho_N +\theta_N.
\]
Thus we rewrite (\ref{f:err01}) as
\begin{equation}\label{f:err01}
((\theta_N(\cdot, t))_{tt}, v) = (\mathcal{L} \rho^N(\cdot, t) , v) + (\mathcal{L} \theta^N(\cdot, t) , v).
\end{equation}
Replacing $v \in \P_N$ by $\theta_N(\cdot, t) \in \P_N$  we get
\[
|((\theta_N(\cdot, t))_{tt}, \theta_N(\cdot, t))| \le |(\mathcal{L}\rho_N(\cdot, t), \theta_N(\cdot, t))| + |(\mathcal{L}\theta_N(\cdot, t), \theta_N(\cdot, t))|
\]
and then applying theorem~\ref{f:semidefn} (negative definiteness of $\mathcal{L}$)
\[
|((\theta_N(\cdot, t))_{tt}, \theta_N(\cdot, t))| \le |(\mathcal{L}\rho_N(\cdot, t), \theta_N(\cdot, t))|.
\]
Thus applying theorem~\ref{f:boundL2}
\[
\|(\theta_N(\cdot, t))_{tt}\| \|\theta_N(\cdot, t)\| \le C \|\rho_N(\cdot, t)\| \|\theta_N(\cdot, t)\|
\]
and then canceling common terms and bounds we get
\[
\|(\theta_N(\cdot, t))_{tt}\| \le C N^{-m} \|u(\cdot, t)\|_{H^{m}}.
\]
\end{proof}
\section{Numerical implementation}
Here we implement spectral Galerkin schemes considering Legendre  spectral polynomials.
The Legendre polynomials satisfy three term recurrence relation:
\[
L_0(x) =1,
\]
\[
L_1(x) =x,
\]
and
\[
(n+1) L_{n+1} (x) = (2n+1)xL_{n(x)} - n L_{n-1}(x), \quad n\ge 1,
\]
with
\[
\int_{-1}^{1} L_k(x) L_j(x) dx = \frac{2}{2k+1}\delta_{kj},
\]
\[
\delta_{kj} = \left\{
\begin{array}{cc}
1& \text{if} \quad k=j\\
0& \text{if} \quad k\ne j.
\end{array}
\right.
\]

Here we consider $\Omega=[-1, 1]$ for numerical implementations and one may extend the scheme to any interval $[-L, L]$, $L>0$.
We define
\[
u^N(x, t) = \sum_{k=0}^{N} a_k(t) L_k(x).
\]
Replacing $v\in \P_N$ by $L_k\in \P_N$ in (\ref{f:specweak01}) we have
\begin{equation}\label{f:specweak02}
 (u^N_{tt}(x, t), L_k(x)) =  (\mathcal{L} u^N (x, t), L_k(x)) + (g(x, t), L_k(x)), \quad \forall \quad L_k(x)\in \P_N(\Omega).
\end{equation}
 Now
 \begin{eqnarray*}
 (u^N_{tt}(x, t), L_k(x)) &=& \sum_{k=0}^{N} \frac{d^2}{dt^2} a_j(t) (L_j(x), L_k(x)) \\
 &=& \frac{2}{2k+1} \frac{d^2}{dt^2} a_k(t),
 \end{eqnarray*}
and
\[
 (g(x, t), L_k(x)) =  \sum_{l=0}^{N} w_l g(x_l) L_k(x_l).
\]
where $w_l$ are Gauss weights and $x_l$ are Gauss quadrature points.
We define
\begin{eqnarray*}
\mathcal{L} u(x, t) &=&  \mathcal{L}_1 u(x, t) + \mathcal{L}_2 u(x, t) \\
              &=& \int \mj(x-y) u(y, t) dy - u(x, t) \int \mj(x-y)  dy.
\end{eqnarray*}
Now
\[
(\mathcal{L} u^N (x, t), L_k(x)) = (\mathcal{L}_1 u^N (x, t), L_k(x)) + (\mathcal{L}_2 u^N (x, t), L_k(x))
\]
where
\[
 (\mathcal{L}_1 u^N (x, t), L_k(x)) = \sum_j a_j(t)\int \int \mj(x-y) L_k(x) L_j(y) dydx
\]
and
\[
 (\mathcal{L}_2 u^N (x, t), L_k(x)) = \sum_j a_j(t)\int L_k(x) L_j(x)\int \mj(x-y)dy  dx.
\]
Since we consider $\mj$ such that
\[
\int_{\Omega} \mj(x) =1, \quad \forall \quad x\in\Omega,
\]
we have
\[
 (\mathcal{L}_2 u^N (x, t), L_k(x)) = \sum_j a_j(t)\int L_k(x) L_j(x)  dx=\frac{2}{2k+1} a_k(t).
\]
Adding all these integrals (\ref{f:specweak02}) can be written as
\begin{eqnarray}\label{f:specweak02aa}
 \frac{2}{2k+1} \frac{d^2}{dt^2}a_k(t) &=& \sum_{j=0}^N a_j(t) \int_{\Omega} \int_{\Omega} \mj(x-y) L_k(x) L_j(y) dy dx \nonumber\\
 &&\quad - \frac{2}{2k+1} a_k(t) + \int g(x, t) L_k(x) dx,
\end{eqnarray}
for all $k=0, 1, \cdots, N$. Thus \eqref{f:specweak02aa} can be written as
\[
   M \frac{d^2}{dt^2}a(t) = A a(t) + b(t),
\]
with $a(0)=M^{-1}u_0$, and $a'(0) = M^{-1}v_0$,
$A=A_1+A_2$ and elements of $A_1$ are defined from the continuous operator $\mathcal{L}_1$ as well as the elements of  $A_2$ are defined from the continuous operator of $\mathcal{L}_2$.
Since $M$ is a nonsingular diagonal matrix, the above second order ordinary differential equation can be written as
\begin{equation}\label{eq:0020}
\frac{d^2}{dt^2}a(t) = M^{-1}A a(t) + M^{-1}b(t).
\end{equation}
%
%\[
%\int_{\Omega}\int_{\Omega}  \mj (x-y) L_{k}(x) L_j(y) dy dx=
%\sum_{i_1=1}^n\sum_{i_2=1}^n \int_{\Omega_{i_1}}\int_{\Omega_{i_2}}  \mj (x-y) L_{k}(x) L_j(y) dy dx
%\]
 We approximate the integrals by Gaussian quadrature formula
\begin{align*}
 \int_{\Omega}\int_{\Omega}  \mj (x-y) L_{k}(x) L_j(y) dy dx
& \approx \sum_{l=0}^{M}w_lL_{k}(x_l)\int_{\Omega}  \mj (x_l-y)  L_j(y) dy\\
& \approx \sum_{l=0}^{M}\sum_{m=0}^{M} w_l L_{k}(x_l) w_m  \mj (x_l-x_m)  L_j(x_m),
\end{align*}
and
\begin{align*}
  \int_{\Omega} g(x, t) L_{k}(x) dx &\approx \sum_{l=0}^{M} w_l g(x_l, t) L_j(x_l),
\end{align*}
where $x_l$ are quadrature points, and $w_l$ are quadrature weights. \eqref{f:specweak02aa} is a system of second order time dependent ordinary differential equations which can be solved using a simple finite difference scheme to approximate the coefficients $a_k(t)$.

Depending on the nature of the kernel function, it is also wise to divide the integral domain into pieces. Then one can use small number of quadrature points to compute the integrals, since over each subinterval/subdomain $J(x)$ will be flatter (if $J(x)$ is a smooth function over $\Omega$.) We discuss the quadrature  approximations in Section~\ref{f:section001}.
\section{Numerical experiments and discussion}
In this section, we exhibit several numerical results to illustrate the scheme. Here we  use a simple scheme for time integration. We use MATLAB to serve our purpose. We approximate the time dependent system of equations \eqref{eq:0020} using a central difference scheme~\footnote{Mathematica and MATLAB builtin functions can also be used to solve the time dependent system of equations \eqref{eq:0020}.}. For simplicity we consider $\Omega=[-1, 1]$ and the scheme can be extended to $[-L, L]$, for any $L>0$. We approximate \eqref{eq:0020} by
\begin{equation}\label{eq:0021}
\frac{a^{j+1}-2a^{j}+a^{j-1}}{dt^2} = M^{-1}A a^{j+1} + M^{-1}b(t_j)
\end{equation}
where $a^j=a(t_j).$
 This scheme is accurate~\cite{Atkinson} of order $\mathcal{O}(\dt^2 +N^{-m})$.

 We know that $\rho (A)\le \|A\|$ for any matrix $A$ where $\rho(A)$ stands for condition number of $A$. Also
$\|\frac{1}{I-\dt^2 M^{-1}A}\| \le \frac{1}{1-\|dt^2 M^{-1}A\|}.$  Now
\begin{eqnarray*}
|A_{jk}^1|=| \int \int \mj(x-y) L_k(x) L_j(y) dy dx|
    &\le \max_{x\in \Omega} \mj(x)|\int \int L_k(x) L_j(y) dy dx|\\
    &\le | \int  L_k(x) dx| |\int  L_j(y) dy| \le 4 K,
\end{eqnarray*}
since
\[
\int_{\Omega} L_j(x) dx =\left\{
                         \begin{array}{cc}
                         2 & j=0\\
                         0 & j\ne 0,
                         \end{array}
                         \right.,
\]
 $\mj$ is a normalized function and $K=\max_{x\in \Omega} |\mj (x)|$. We have
\[
\|A_1\|=\sqrt{\sum_{i,j=1}^{N+1}|A_{ij}^1|^2} \le \sqrt{\sum_{i,j=1}^{N+1} 16K}=\sqrt{16(N+1)^2K^2}=4(N+1)K,
\]
and
\[
\|A_2\| = \sqrt{\sum_{j,j=1}^{N+1} |A^{2}_{jj}|^2}\le 2\sqrt{\sum_{j=1}^{N+1} \left|\frac{1}{2j-1}\right|^2 }< 4.
\]
Thus
\[
\|A\| \le \|A_1\| + \|A_2\| \le 4\left((N+1)K+1\right),
\]
and
$
1\le\|M\| \le 4
$,
shows that $\|M^{-1}A\| \le 4\left((N+1)K+1\right)$ which is a bounded quantity for any finite $N$.
So $\dt^2\|M^{-1}A\|\longrightarrow 0$ as $\dt \longrightarrow 0$.
Hence
\[
\rho\left(\frac{1}{I-\dt^2 M^{-1}A}\right)\le \|\frac{1}{I-\dt^2 M^{-1}A}\|
\le \frac{1}{1-\|\dt^2 M^{-1}A\|}
\le 1,
\]
and thus by the definition of condition number of a matrix
\[
 \rho\left(\frac{1}{I-\dt^2 M^{-1}A}\right)=1,\ \text{as}\ \dt\rightarrow 0\ \text{and}\ N<\infty.
\]
Thus the scheme \eqref{eq:0021} is unconditionally stable.

In the following Figures~\ref{fig:slutns01}-\ref{fig:slutns02}, we present the numerical solutions for various choices of initial functions, $\rho$, and $g(x, t)$ to demonstrate the scheme considering $\mj(x)=\sqrt{\frac{400}{\pi}}e^{-400 x^2}$. Here we notice that the wave form is similar to the solutions of local linear wave model and the numerical solutions behaves well for any finite time preserving wave phenomena.
\begin{figure}[here]
\begin{center}%{\bf (a) \hspace{5.5cm} (b)}\\
\includegraphics[width=0.49\textwidth,height=6.5cm]{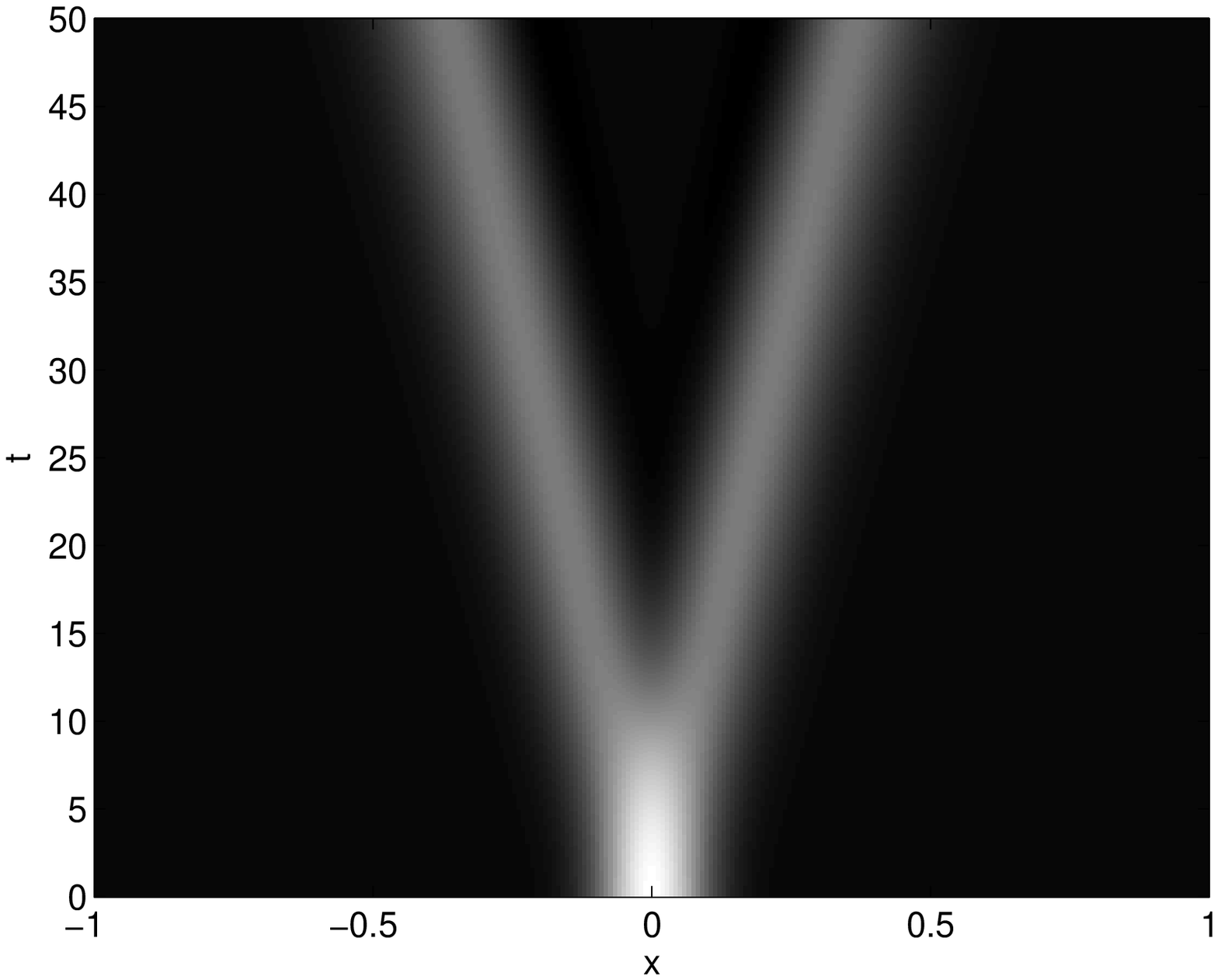}
\includegraphics[width=0.49\textwidth,height=6.5cm]{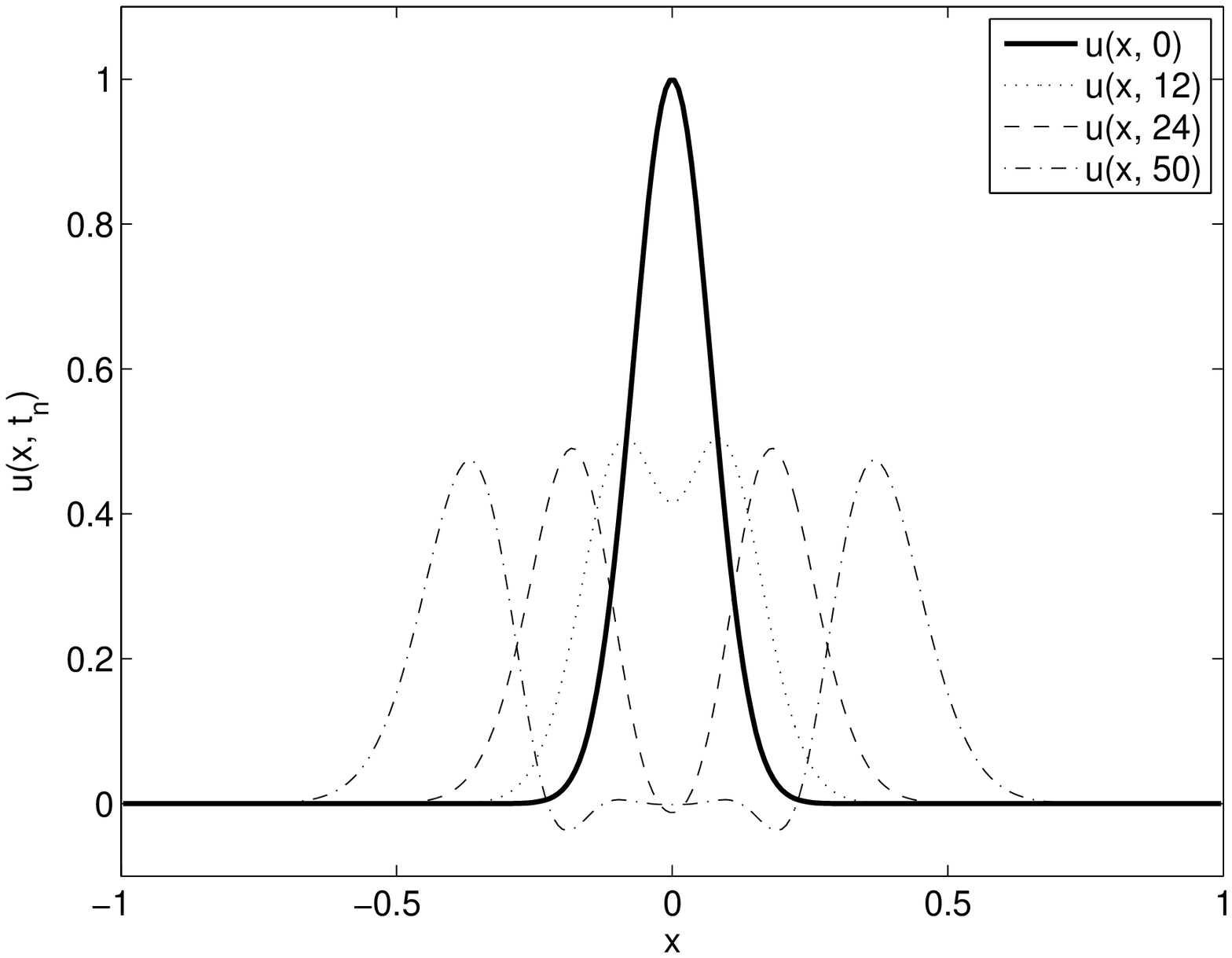}
\includegraphics[width=0.49\textwidth,height=6.5cm]{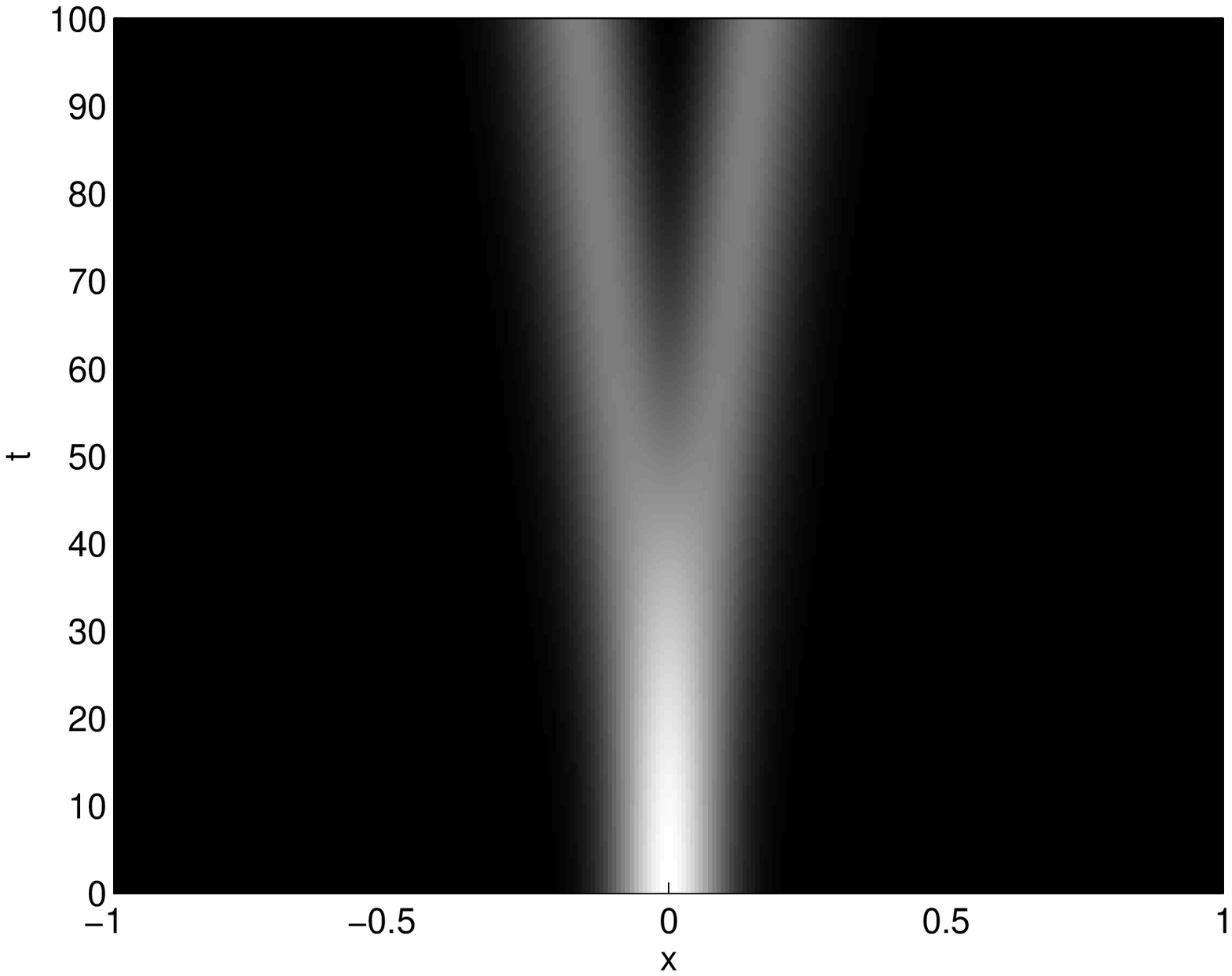}
\includegraphics[width=0.49\textwidth,height=6.5cm]{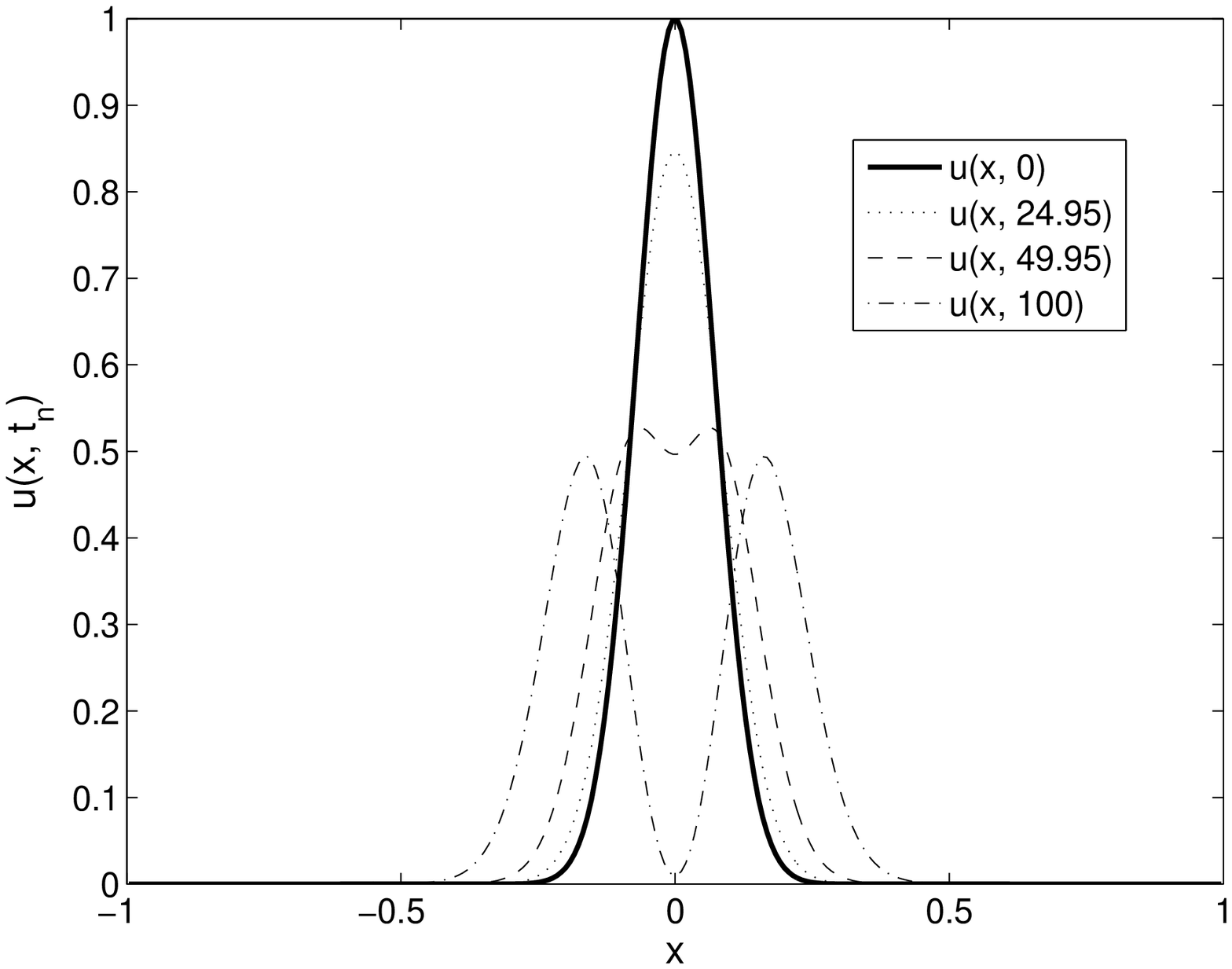}
\caption{ We consider $u(x, 0)= \exp(-100 x^2)$, $u_t(x, 0)=0$, $\dt=0.05$,  $\rho=.1$(upper figures) and $\rho=0.01$ (bottom figures).}
\label{fig:slutns01}
\end{center}
\end{figure}
\begin{figure}[here]
\begin{center}%{\bf (a) \hspace{5.5cm} (b)}\\
\includegraphics[width=0.49\textwidth,height=6.5cm]{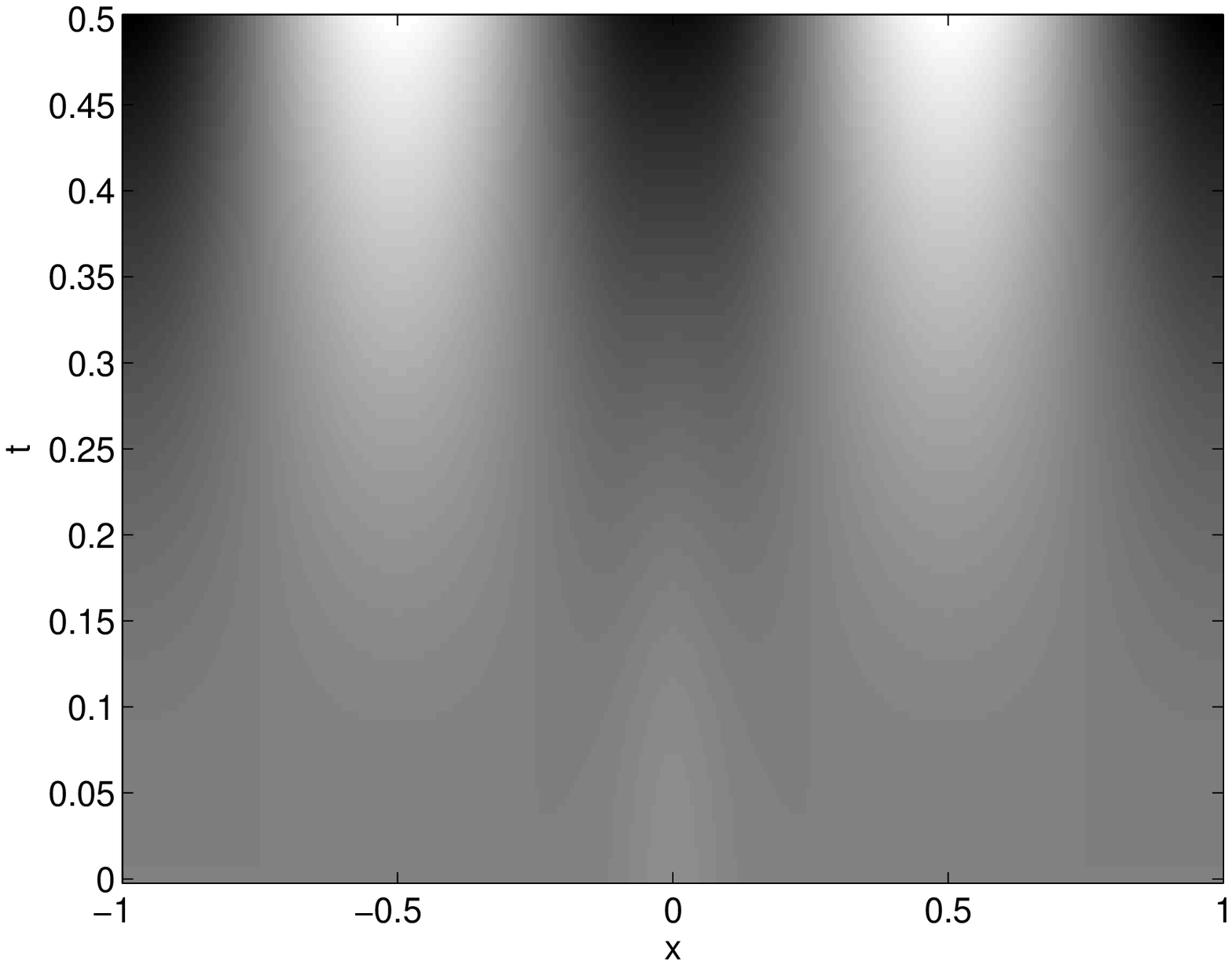}
\includegraphics[width=0.49\textwidth,height=6.5cm]{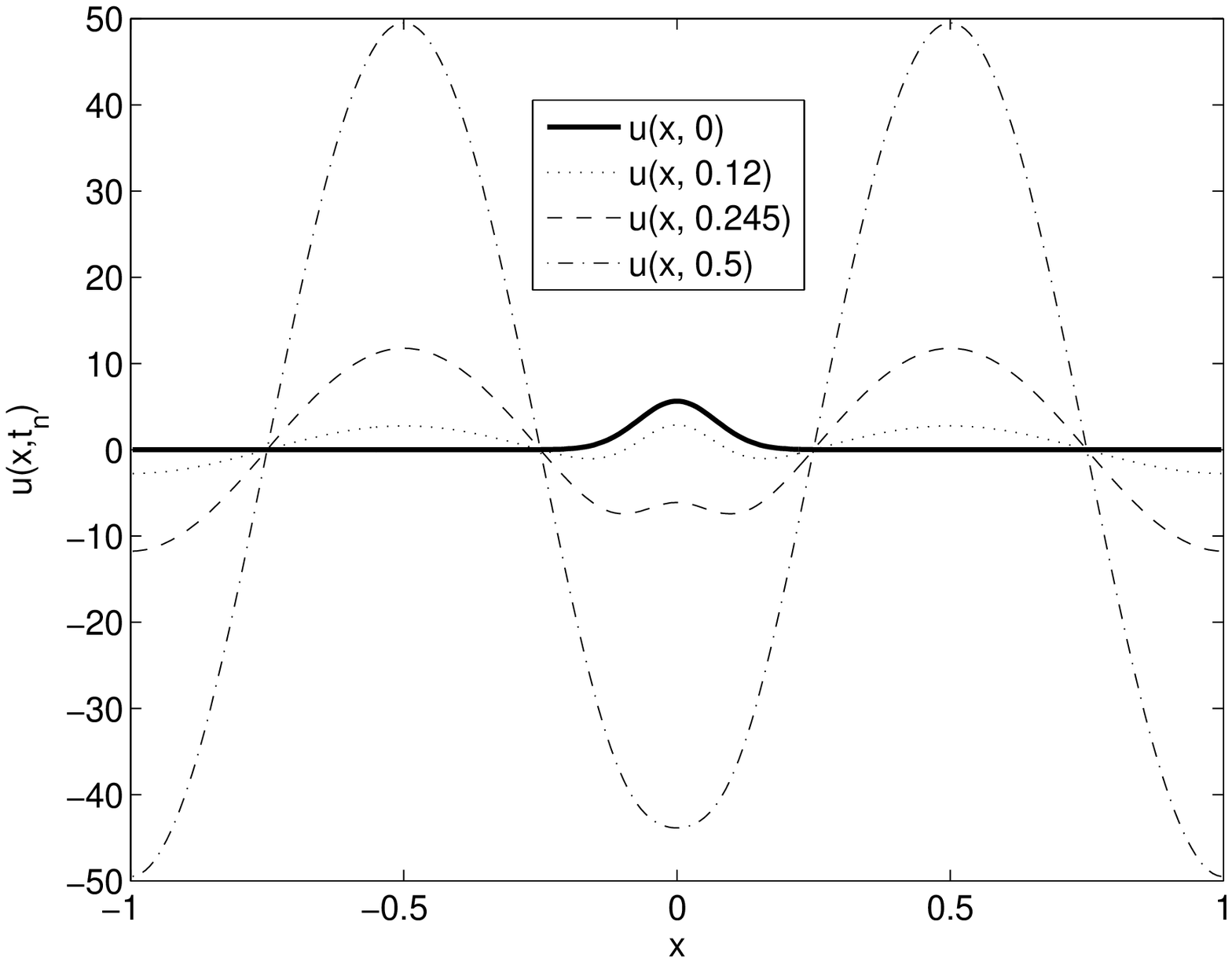}
\caption{ We consider $u(x, 0)= \sqrt{100/\pi}\exp(-100 x^2)$, $u_t(x, 0)=0$, $\rho=.01$, $\dt=0.005$, and $g(x)=-10^{-2} \cos(2\pi x)$.}
\label{fig:slutns02}
\end{center}
\end{figure}
Here in this study we have shown theoretically that the error in spectral approximation decays exponentially. More precisely, it is shown that if the kernel function and the solution underlying the model problem is smooth, then the errors obtained by the Legendre spectral scheme decays exponentially, which one desires from spectral method.

There is a drawback of this approximation. Because of the presence of the convolution operator, the discrete operator is a dense matrix and as a result numerical computations become  slower, needs huge storage for computations in multidimensional domain.

\appendix
\section{Quadrature Approximation}\label{f:section001}
We define $x^k$, $k=1, 2, \cdots, M$ as quadrature points.
Now in each point $x^k$, we write (\ref{eq:001}) as
\begin{eqnarray*}%\label{eq:001a}
\frac{\partial^2}{\partial t^2} u(x^k,t) &=& \rho \int_{\Omega} \mj(x^k-y)(u(y, t)-u(x^k, t)) dy + g(x^k,t)
%\\
\end{eqnarray*}
which can be approximated by
\begin{eqnarray}\label{eq:00001a}
\frac{\partial^2}{\partial t^2} u(x^k,t) %=& \rho \int_{\Omega} \mj(x_j^k-y)(u(y, t)-u(x_j^k, t)) dy + g(x_j^k,t)
%&=&\rho \sum_{m=1}^{N_h}\int_{\Omega_m} \mj(x_j^k-y)(u(y, t)-u(x_j^k, t)) dy + g(x_j^k,t)\nonumber\\
&=&\rho \sum_{i=1}^M w^i \mj(x^k-x^i)(u(x^i, t)-u(x^k, t)) + g(x^k,t),
\end{eqnarray}
where $w^i$ are quadrature weights. \eqref{eq:00001a} is a system of second order ordinary differential equations, which can be solved by any standard techniques.

One may also use a few quadrature points by subdividing the domain into several subdomains. We subdivide the domain $\Omega$ into $\Omega_j$, $j=1, 2, \cdots, N_h$ subdomains such that $\Omega=\sum_j \Omega_j$. On each of the subdomains $\Omega_j$, $j=1, 2, \cdots, N_h$ we define  $K$ points $x_j^k$, $k=1, 2, \cdots, K$ as quadrature points.
%\subsection{Quadrature Scheme}

Now in each point $x_j^k$, we write (\ref{eq:001}) as
%\begin{sidewaystable}
%\begin{tabular}{c}
%
\begin{eqnarray*}%\label{eq:001a}
\frac{\partial^2}{\partial t^2} u(x_j^k,t) &=& \rho \int_{\Omega} \mj(x_j^k-y)(u(y, t)-u(x_j^k, t)) dy + g(x_j^k,t)
\\
&=&\rho \sum_{m=1}^{N_h}\int_{\Omega_m} \mj(x_j^k-y)(u(y, t)-u(x_j^k, t)) dy + g(x_j^k,t),
%&=&\rho \sum_{m=1}^{N_h}\sum_{i=1}^K w_m^i \mj(x_j^k-x_m^i)(u(x_m^i, t)-u(x_j^k, t)) + g(x_j^k,t),
\end{eqnarray*}
which can be approximated by $K$ point Gauss quadrature formula as
\begin{eqnarray}\label{eq:001a}
\frac{\partial^2}{\partial t^2} u(x_j^k,t) %=& \rho \int_{\Omega} \mj(x_j^k-y)(u(y, t)-u(x_j^k, t)) dy + g(x_j^k,t)
%&=&\rho \sum_{m=1}^{N_h}\int_{\Omega_m} \mj(x_j^k-y)(u(y, t)-u(x_j^k, t)) dy + g(x_j^k,t)\nonumber\\
&=&\rho \sum_{m=1}^{N_h}\sum_{i=1}^K w_m^i \mj(x_j^k-x_m^i)(u(x_m^i, t)-u(x_j^k, t)) + g(x_j^k,t),
\end{eqnarray}
%
%\end{tabular}
%\end{sidewaystable}
where $w_m^i$ are quadrature weights.
%\]
\eqref{eq:001a} can be solved using any  linear  ordinary differential equation solvers.

Here, in Figure~\ref{fig:slutns01aa}, we demonstrate a solution of the model in two space dimensions using mid-point quadrature formula for space integration followed by Euler's formula for time integration in a periodic $(0, 1)^2$ space domain.
\begin{figure}[here]
\begin{center}%{\bf (a) \hspace{5.5cm} (b)}\\
\includegraphics[width=0.95\textwidth,height=12.5cm]{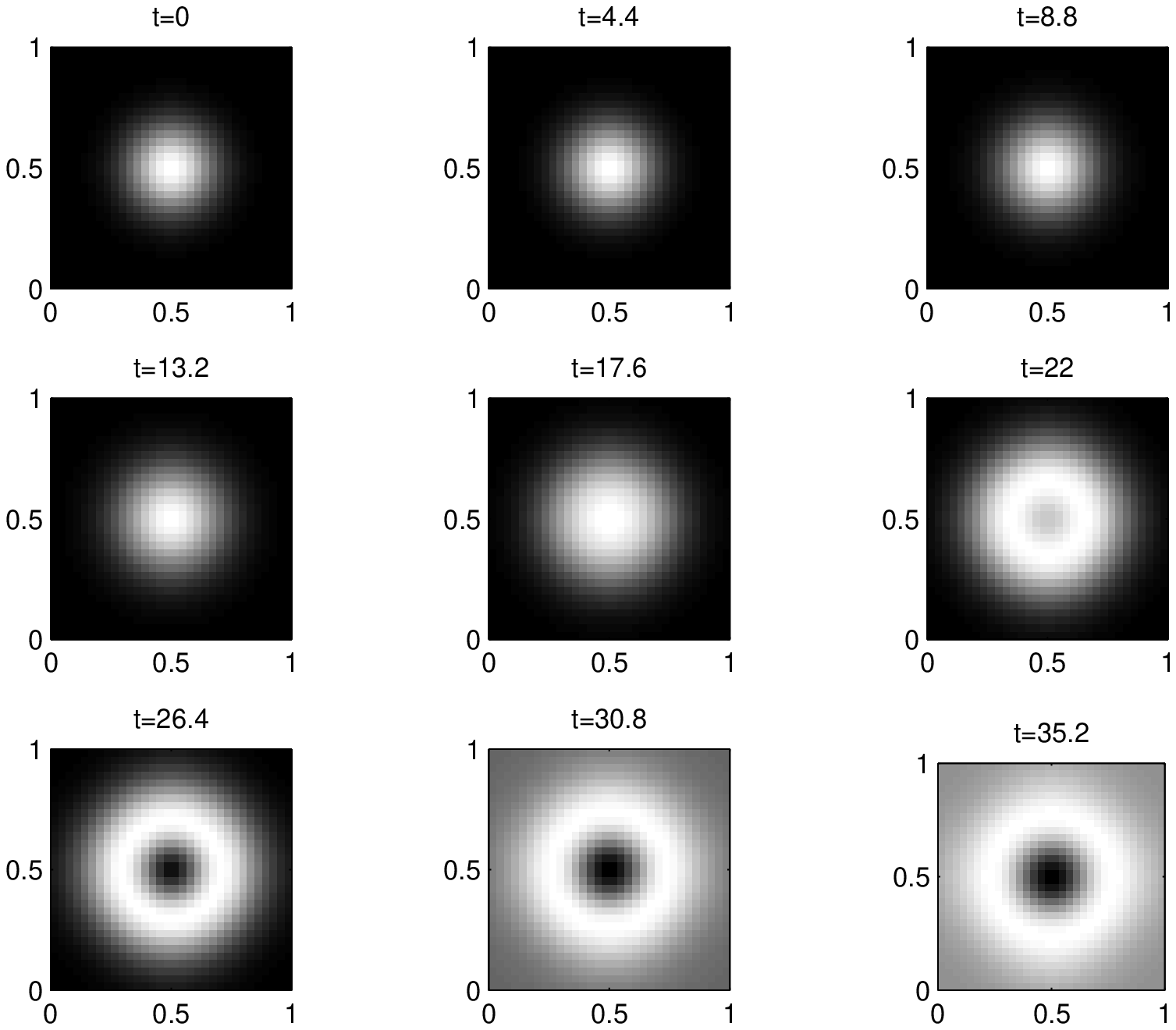}
\caption{ We consider $\rho=0.1$, $dt=0.1$, $N=32$ space elements, $u(x, y, 0)=\exp\left(-10\left((x-.5)^2)+(y-.5)^2\right)\right)$, $u_t(x, y, 0)=0$.}
\label{fig:slutns01aa}
\end{center}
\end{figure}

%\bibliography{refwaves}{}

\begin{thebibliography}{10}

\bibitem{Atkinson}
K.~Atkinson and W.~Han.
\newblock {\em {Theoretical Numerical Analysis}}.
\newblock Springer, 2001.

\bibitem{C.Fife}
Peter~W. Bates, Paul~C. Fife, Xiaofeng Ren, and Xuefeng Wang.
\newblock Travelling waves in a convolution model for phase transitions.
\newblock {\em Archive for Rational Mechanics and Analysis}, 138(2):105--136,
  July 1997.

\bibitem{SKB02}
S.~K. Bhowmik.
\newblock {\em Numerical approximation of a nonlinear partial
  integro-differential equation}.
\newblock PhD thesis, Heriot-Watt University, Edinburgh, UK, July, 2008.

\bibitem{SamirKumarBhowmik04}
S.~K. Bhowmik.
\newblock Numerical approximation of a convolution model of dot theta-neuron
  networks.
\newblock {\em Applied Numerical Mathematics}, 61:581--592, 2011.

\bibitem{SKB_pap_linearide}
S.~K. Bhowmik.
\newblock {Stability and Convergence Analysis of a One Step Approximation of a
  Linear Partial Integro-Differential Equation}.
\newblock {\em Numerical methods for PDEs}, 27(5):1179--1200, September 2011.

\bibitem{SKB_pap_phase_trans}
S.~K. Bhowmik and D.~B. Duncan.
\newblock {Piecewise Polynomial Approximation of a Nonlinear Partial
  Integro-Differential Equation}.
\newblock Technical report, Department of Mathematics, Heriot-Watt University,
  December, 2009.

\bibitem{C.Canuto.M.Y.Hussaini.A.Quarterni.T.A.Zang2006}
C.~Canuto, M.~Y. Hussaini, A.~Quarterni, and T.~A. Zang.
\newblock {\em {Spectral Methods, Fundamentals in Single Domains}}.
\newblock Springer, 2006.

\bibitem{JCAC2004}
J.~Carr and A.~Chmaj.
\newblock Uniqueness of travelling waves for nonlocal monostable equations.
\newblock {\em Proceedings of the American Mathematical Society},
  132(8):2433--2439, Aug. 2004.

\bibitem{D.J.Duffy03}
D.~J. Duffy.
\newblock {\em {Finite Difference Methods for Financial Engineering, A Partial
  Differential Equation Approach}}.
\newblock Wiley Finance, John Wiley and Sons, 2006.

\bibitem{Dug}
Dugald~B. Duncan, M~Grinfeld, and I~Stoleriu.
\newblock {Coarsening in an integro-differential model of phase transitions}.
\newblock {\em Euro. Journal of Applied Mathematics}, 11:511--523, 2000.

\bibitem{EEOW01}
Etienne Emmrich and Olaf Weckner.
\newblock Anslysis and numerical approximation of an integro-differential
  equation modelling non-local effects in linear elasticity.
\newblock {\em Math. Mech. Solids}, 12(4):363--384, 2007.

\bibitem{P.Solin.K.Segeth.I.Dolezel2004}
P.~Solin, K.~Segeth, and I.~Dolezel.
\newblock {\em {Higher Order Finite Element Methods}}.
\newblock Chapman and Hall/CRC, 2004.

\end{thebibliography}
%\bibliographystyle{plain}

\end{document}